%%%%%%%%%%%%%%%%%%%%%%%%%%%%%%%%%%%%%%%%%%%%%%%%%%%%%%%%%%%%%%%%

%%%%%%%%%%%%%%%%%%%%%% 6 June  2019  %%%%%%%%%%%%%%%%%%%%%%%%%%%%

%%%%%%%%%%%%%%%%%%%%%%%%%%%%%%%%%%%%%%%%%%%%%%%%%%%%%%%%%%%%%%%%

\documentclass[12pt,a4paper,draft]{article}

\usepackage{geometry}
\geometry{a4paper,total={210mm,297mm},left=20mm,right=20mm, top=22mm,bottom=22mm}
\usepackage[T1]{fontenc}
\usepackage[utf8]{inputenc}
\usepackage{authblk}
\usepackage{amsmath}
\usepackage{amssymb}
\usepackage{amsfonts}
\usepackage{amsthm}
\usepackage{enumerate}
\usepackage{paralist}
\usepackage{amsfonts}
\usepackage{tensor} 
\usepackage{lmodern}
\usepackage{upgreek}
\usepackage{mathrsfs}
\usepackage{nicefrac}
\usepackage{verbatim}
\usepackage{amsmath}
\usepackage[all, cmtip]{xy}\usepackage{amssymb}
\usepackage{amsthm}
\usepackage{extpfeil}
\usepackage{bbm}
\usepackage{paralist}
\usepackage[colorlinks,linkcolor=blue,citecolor=gray,urlcolor=blue,pdftex]{hyperref}

%\usepackage{titlesec}
%\titleformat*{\section}{\large\bfseries\sffamily}

\usepackage[capposition=bottom]{floatrow}

\usepackage[dvipsnames]{pstricks}
\usepackage{pst-solides3d}

\usepackage{enumitem}

\usepackage{titlesec}
\titleformat*{\section}{\large\bfseries}
\titleformat*{\subsection}{\large}

\usepackage[titletoc,toc,title]{appendix}

\usepackage{scalerel}
\usepackage[all]{xy}
\usepackage{tikz}
\usepackage{relsize}
\usepackage{comment}
\usepackage{mathtools}
\usetikzlibrary{decorations.pathmorphing}

\usepackage{mathrsfs}

\usepackage{graphicx}

\usepackage{tkz-euclide}
\usetikzlibrary{intersections}
\usetkzobj{all}
\usetikzlibrary{decorations.pathmorphing}

\tikzset{snake it/.style={decorate, decoration=snake}}

\usepackage{float}

%\usepackage{pst-node}
%\uspackage{auto-pst-pdf}
\usepackage{tikz-cd} 

%%%%%%%%%%%%%%%%%%%%%%%%%%%%
\theoremstyle{plain}
\newtheorem{thm}{Theorem}

\newtheorem{lem}[thm]{Lemma}

\newtheorem{prob}[thm]{Problem}

\newtheorem*{prb*}{Problem}

\newtheorem{claim}{Claim}[thm]

\theoremstyle{definition}
\newtheorem{rem}[thm]{Remark}
\newtheorem{df}[thm]{Definition}
\newtheorem*{ex*}{Example}

%\numberwithin{equation}{section}

%%%%%%%%%%%%%%%%%%%%%%%%%%%%%%%%%%%%%%%%%%%%%%%%%%%%%%%%%%%%%%%%%%%%%%%%%%

% ------------------------------------------------------------------------
\title{A note on Lefschetz spheres and their relatives}
\author{Afshin Goodarzi\thanks{The author is supported by the Knut and Alice Wallenberg Foundation.
}}

\affil{\small{ Royal Institute of Technology, Department of Mathematics, S-100 44, Stockholm, Sweden.} }

\date{ }

% ------------------------------------------------------------------------

\begin{document}

\maketitle

\begin{abstract}
Inspired by the works of Adiprasito, Babson, Nevo, and Murai on the $g$-conjecture, we consider different classes of PL-spheres and the relations between them. We focus on a certain class of spheres that is in the intersection
of vertex-decomposable spheres (a concept due to 
Provan and Billera) and strongly edge-decomposable
spheres (a concept due to Eran Nevo). The spheres in this
class are exactly those vertex-decomposable ones for which Adiprasito's recent proof method in \cite[Theorem 6.3]{Karim} works.

\end{abstract}

\section{Introduction}

Let $\Sigma$ be a $(d-1)$-dimensional simplicial complex. The \emph{$f$-vector} $f(\Sigma)=(f_{-1},f_0,\ldots,f_{d-1})$ of $\Sigma$ lists the number of various dimensional faces in $\Sigma$. The \emph{$h$-vector} $h(\Sigma)=(h_{0},h_1,\ldots,h_{d})$ of $\Sigma$ is defined via the following equations

\begin{equation*}
h_j=\sum_{i=0}^j (-1)^{j-i} {d-i\choose j-i} f_{i-1}\quad \text{ for all } 0\leq j\leq d.
\end{equation*}
\ It is well-known and easy to see that the $f$-vector and the $h$-vector contain the same information. However, many interesting relations can be stated more clearly in terms of the $h$-numbers. Among them is the \emph{Dehn-Sommerville relations} $h_j=h_{d-j}$, $0\leq j\leq d$ for simplicial (homology) spheres \cite[p. 67]{green}. In particular, these relations show that the face numbers of a (homology) sphere $\Sigma$ are completely determined from the first ``half'' of the $h$-vector, or equivalently from the \emph{$g$-vector} $g(\Sigma)=(g_0,g_1,\ldots,g_{\lfloor d/2 \rfloor})$ of $\Sigma$ where $g_0=1$ and $g_j=h_j-h_{j-1}$ for all $1\leq j\leq \lfloor d/2 \rfloor$. 

One of the major open problems on face numbers of simplicial complexes is McMullen's \emph{$g$-conjecture}. It states that a necessary and sufficient condition for a vector $\mathbf{g}=(g_0,g_1,\ldots,g_{\lfloor d/2 \rfloor})$ to be the $g$-vector of a $(d-1)$-dimensional homology sphere is that there exists a set $\mathbf{M}$ of monomials closed under taking divisors, such that for all $1\leq j\leq \lfloor d/2 \rfloor$ the number of monomials of degree $j$ in $\mathbf{M}$ is $g_j$. 

Billera and Lee \cite{BL80} proved the sufficiency of McMullen's $g$-conjecture. The necessity part would follow from an algebraic approach that we briefly describe now. We refer to the book \cite{green} by Stanley for undefined algebraic terminology and to the survey article \cite{Swa} for a more detailed description.

Let $\Sigma$ be a $(d-1)$-dimensional simplicial complex on the vertex set $V=V(\Sigma)$. Let $\mathbb{F}$ be an infinite field and let $R=\mathbb{F}[x_v|v\in V]$ be the polynomial ring with grading induced by setting $\mathrm{deg}\ x_v=1$ for all $v\in V$. The \emph{Stanley-Reisner ring} $A=R/I_\Sigma$ of $\Sigma$ over $\mathbb{F}$ is obtained by dividing $R$ by the ideal $I_\Sigma$ generated by all square-free monomials whose support is not in $\Sigma$. Let $\Sigma$ be Cohen-Macaulay over $\mathbb{F}$ with a symmetric $h$-vector. We say that $\Sigma$ has the \emph{strong Lefschetz property} over $\mathbb{F}$ (or $\Sigma$ is a \emph{Lefschetz complex} over $\mathbb{F}$) if there exist a linear system of parameters $\theta_1,\ldots,\theta_d\in R_1$ and a linear form $\omega\in R_1$ such that the multiplication map 
\[ \omega^{d-2j}: (A/\Theta)_j\rightarrow (A/\Theta)_{d-j}
\]
is a bijection for all $0\leq j\leq \lfloor d/2 \rfloor$, where $\Theta$ is the ideal of $A$ generated by $\theta_1,\ldots,\theta_d$. The \emph{algebraic $g$-conjecture} states that every homology sphere is a Lefschetz complex over $\mathbb{R}$. The algebraic $g$-conjecture implies the $g$-conjecture.

The prototype of this algebraic approach is Stanley's paper \cite{Sta80}, where by using tools from toric geometry he showed that boundary complexes of simplicial polytopes are Lefschetz over $\mathbb{R}$.

In \cite{NevoMinors}, Nevo introduced the class of \emph{strongly edge-decomposable} spheres. It was shown by Babson and Nevo \cite{BabNevo} that strongly edge-decomposable spheres are Lefschetz over any field of characteristic zero. Their result was generalized by Murai, see \cite[Corollary 3.5]{Murai}, where he considers a more general class of strongly edge-decomposable complexes without any assumption on the characteristic of the field. 

 Recently, Adiprasito announced a proof of the algebraic $g$-conjecture for $\mathbb{R}$-homology spheres  \cite{Karim}, and in particular for vertex-decomposable spheres. While the reach and validity of his general claim awaits confirmation, in this note we focus on a class of vertex-decomposable spheres considered in his work \cite[Theorem 6.3]{Karim} (see also Section 6 of his most recent manuscript \cite{Adi19b}) that we call \emph{strongly vertex-decomposable}. Adiprasito's path of argument 
offers a proof of the algebraic $g$-conjecture that is valid  for this class of spheres. We show that strongly vertex-decomposable spheres are in fact strongly edge-decomposable,
and thus, the algebraic $g$-conjecture for this class of spheres follows also from the above-mentioned works by Babson and Nevo \cite{BabNevo} and by Murai \cite{Murai}.

In Figure \ref{Fig1} we illustrate the relations between different classes of spheres and Lefschetz complexes. A solid directed arrow from a class $X$ to the class $Y$ indicates the containment relation $X\subset Y$.

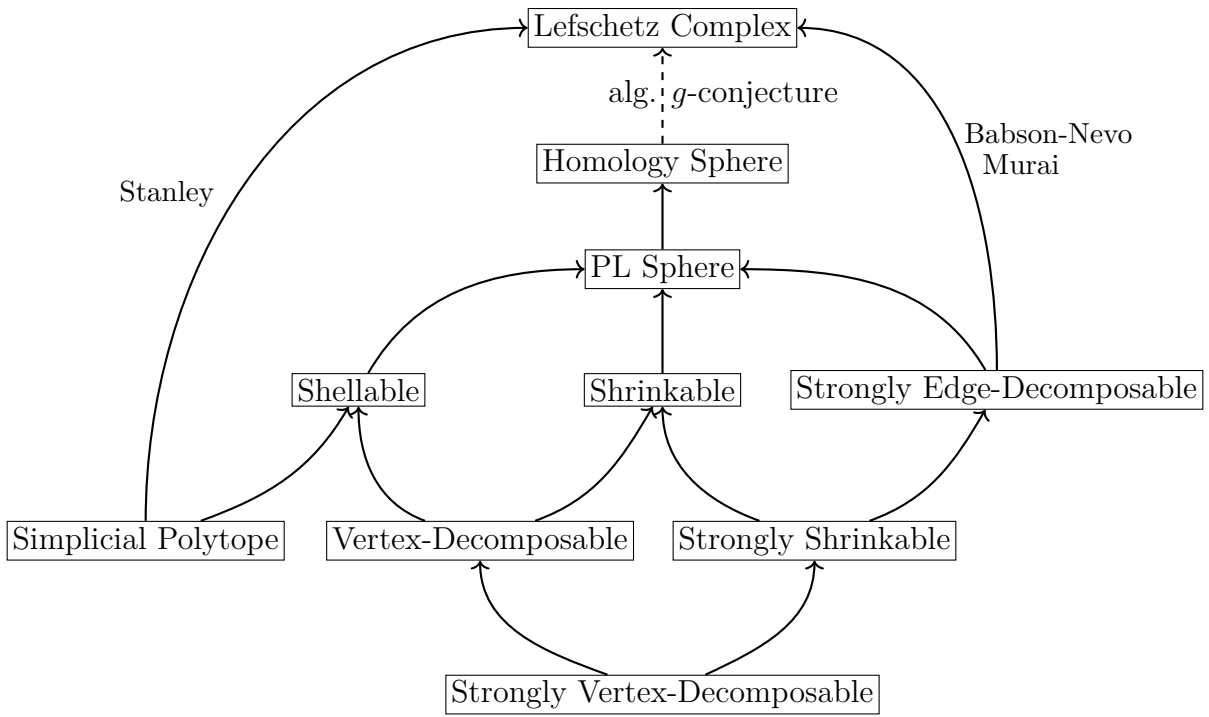
\begin{figure}[H]\label{Fig1}
\begin{tikzpicture}
[scale=.4, vertices/.style={draw, inner sep=2pt}]
%\draw[ball color=black!12, draw] (0,0) circle (1);
%\draw[dashed,thick] (1,0) arc (0:180:1 and .25);
%\draw[thick] (-1,0) arc (180:360:1 and .25);
%\draw[pattern=north east lines] (.5,.5) arc (0:360: .2 and .1);
%\node[vertices] (x) at (1,0){};
\node (y) at (15,15.8){alg. $g$-conjecture};
\node (x) at (24.8,13.5){\small{Murai}};
\node (w) at (25.7,14.5){\small{Babson-Nevo}};

\node (z) at (-3.3,12.5){\small{Stanley}};

%\shadedraw[shading=axis] (3,0) ellipse (.5 and .2);
\node[vertices] (L) at (13,18){Lefschetz Complex};
\node[vertices] (H) at (13,13.5){Homology Sphere};
\node[vertices] (P) at (13,10){PL Sphere};
\node[vertices] (R) at (13,6){Shrinkable};

\node[vertices] (E) at (24,6){Strongly Edge-Decomposable};
\node[vertices] (Sh) at (3,6){Shellable};
\node[vertices] (V) at (7,1){Vertex-Decomposable};

\node[vertices] (Sv) at (13,-4.1){Strongly Vertex-Decomposable};
\node[vertices] (Ss) at (18,1){Strongly Shrinkable};
\node[vertices] (B) at (-4,1){Simplicial Polytope};

\draw[->, dashed,thick] (H)--(L);
\draw[->, thick] (P)--(H);
\draw[->, thick] (E)to [out=90,in=0](L);
\draw[->, thick] (E)to [out=120,in=0](P);
\draw[->, thick] (Sh)to [out=60,in=180](P);
\draw[->, thick] (V)to [out=160,in=-90](Sh);
\draw[->, thick] (R)--(P);
\draw[->, thick] (V)to [out=20,in=-120](R);
\draw[->, thick] (Sv)to [out=160,in=-90](V);
\draw[->, thick] (Sv)to [out=25,in=-90](Ss);
\draw[->, thick] (Ss)to [out=20,in=-120](E);
\draw[->, thick] (Ss)to [out=160,in=-90](R);
\draw[->, thick] (B)to [out=20,in=-120](Sh);
\draw[->, thick] (B) to [out=90,in=-180](L);

%\draw[ thick] (d)--(x);
%\draw[ thick] (y)--(v);

\end{tikzpicture}

\caption{Relations between different classes of spheres}

\end{figure}

\noindent If we disregard the class of Lefschetz complexes in this figure, then $X\subset Y$ if and only if there is a directed path from $X$ to $Y$, except possibly for the case when ``$Y$=strongly edge-decomposable'' and ``$X=$vertex-decomposable'' or ``$X=$shrinkable'' (see Section \ref{Sec:Ex}). 

%%%%%%%%%%%%%%%%  %%%%%%%%%%%%%%%%%%%%%%

\section{Edge Contraction}

We begin by establishing the notation. For undefined terminology and the basic facts about \emph{piecewise linear} (PL) topology, we refer the reader to the chapter \cite{Top} by Bj\"orner. 

Let $\Sigma$ be a simplicial complex and $\sigma$ be a face of $\Sigma$. The \emph{star} $\mathrm{st}(\sigma,\Sigma)$ of $\sigma$ in $\Sigma$ is the set of all faces $\tau\in\Sigma$ such that $\sigma\cup\tau\in\Sigma$. The \emph{antistar} $\mathrm{ast}(\sigma,\Sigma)$ of $\sigma$ in $\Sigma$ is the set of all faces $\tau\in\Sigma$ such that $\sigma\cap\tau=\emptyset$. The \emph{link} of $\sigma$ in $\Sigma$ is defined by \(\mathrm{lk}(\sigma,\Sigma):=\mathrm{st}(\sigma,\Sigma)\cap\mathrm{ast}(\sigma,\Sigma)\). The \emph{deletion} $\Sigma\setminus\sigma$ of $\sigma$ from $\Sigma$ is the subcomplex of $\Sigma$ consisting of all faces that do not contain $\sigma$. 

If $e=uv$ is an edge of $\Sigma$, the \emph{contraction} of $\Sigma$ with respect to $e$ is the simplicial complex $\mathcal{C}(e,\Sigma)$ obtained from $\Sigma$ by identifying $u$ and $v$. More precisely, $\mathcal{C}(e,\Sigma)$ is a simplicial complex on the vertex set $V(\Sigma)\setminus\{u\}$ with the following set of faces 
\[ \mathcal{C}(e,\Sigma)=\{\sigma\in\Sigma | u\notin \sigma\}\cup \{(\sigma\setminus\{u\})\cup\{v\} | u\in\sigma\in\Sigma\}.\]

Let $\Sigma$ be a PL-sphere and $e=uv$ be an edge of $\Sigma$. We say that $\Sigma$ satisfies the \emph{link condition} with respect to $e$ if $\mathrm{lk}(v,\Sigma)\cap\mathrm{lk}(u,\Sigma)=\mathrm{lk}(e,\Sigma)$.

\begin{thm}[Klee-Kleinschmidt and Nevo]\label{contract}
Let $\Sigma$ be a $(d-1)$-dimensional PL-sphere and $e=uv$ be an edge of $\Sigma$. Then the following are equivalent:
\begin{enumerate}
\item[$(i)$] $\mathcal{C}(e,\Sigma)$ is PL-homeomorphic to $\Sigma$,
\item[$(ii)$] $\Sigma$ satisfies the link condition with respect to $e$,
\item[$(iii)$] $(\Sigma\setminus\{u\})\setminus\{v\}$ is a $(d-1)$-dimensional PL-ball,
\item[$(iv)$] $\mathrm{st}(u,\Sigma)\cup\mathrm{st}(v,\Sigma)$ is a $(d-1)$-dimensional PL-ball.
\end{enumerate}

\end{thm}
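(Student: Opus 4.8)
The plan is to translate everything into statements about PL balls and their boundaries, using only the standard facts (see the chapter \cite{Top}): in a PL $(d-1)$-sphere $\Sigma$ the closed star $\mathrm{st}(w,\Sigma)$ and the antistar $\Sigma\setminus\{w\}$ of a vertex $w$ are PL $(d-1)$-balls with boundary $\mathrm{lk}(w,\Sigma)$, the link $\mathrm{lk}(e,\Sigma)$ is a PL $(d-3)$-sphere which is the common boundary of the PL $(d-2)$-balls $\mathrm{lk}(u,\Sigma)\setminus\{v\}$ and $\mathrm{lk}(v,\Sigma)\setminus\{u\}$, a cone over a PL ball (resp.\ sphere) is a PL ball, two PL $n$-balls glued along their common boundary $(n-1)$-sphere form a PL $n$-sphere, and Newman's theorem that the complement of a PL $n$-ball in a PL $n$-sphere is again a PL $n$-ball. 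The bridge to the contraction is the elementary identity $\mathcal{C}(e,\Sigma)=\bigl((\Sigma\setminus\{u\})\setminus\{v\}\bigr)\cup\bigl(\bar v\ast M\bigr)$, where $\bar v$ is the image of $e$ and $M:=\bigl(\mathrm{lk}(u,\Sigma)\setminus\{v\}\bigr)\cup\bigl(\mathrm{lk}(v,\Sigma)\setminus\{u\}\bigr)=\mathrm{lk}(\bar v,\mathcal{C}(e,\Sigma))$, the two pieces meeting exactly along $M$; in particular $(\Sigma\setminus\{u\})\setminus\{v\}=\mathrm{ast}(\bar v,\mathcal{C}(e,\Sigma))$, and $\mathrm{st}(u,\Sigma)\cup\mathrm{st}(v,\Sigma)$ and $(\Sigma\setminus\{u\})\setminus\{v\}$ are complementary subcomplexes of $\Sigma$.

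I would organize the equivalences around $(ii)$, proving the cycle $(i)\Rightarrow(iii)\Rightarrow(ii)\Rightarrow(i)$ and handling $(iv)$ in parallel. For $(i)\Rightarrow(iii)$: if $\mathcal{C}(e,\Sigma)$ is PL-homeomorphic to $\Sigma$ it is a PL $(d-1)$-sphere, so its antistar $\mathrm{ast}(\bar v,\mathcal{C}(e,\Sigma))=(\Sigma\setminus\{u\})\setminus\{v\}$ is a PL $(d-1)$-ball. For $(iii)\Rightarrow(ii)$: writing $\Delta:=(\Sigma\setminus\{u\})\setminus\{v\}$ and assuming it is a PL $(d-1)$-ball, a facet count identifies $\partial\Delta$ with $M$ and shows $\mathrm{lk}(u,\Sigma)\cap\mathrm{lk}(v,\Sigma)$ contains no $(d-2)$-face; then for $\tau\in\mathrm{lk}(u,\Sigma)\cap\mathrm{lk}(v,\Sigma)$ one has $\tau\in\partial\Delta$, so $\mathrm{lk}(\tau,\Delta)=(\mathrm{lk}(\tau,\Sigma)\setminus\{u\})\setminus\{v\}$ is a positive-dimensional PL ball, and deleting both $u$ and $v$ from the PL sphere $\mathrm{lk}(\tau,\Sigma)$ yields a PL ball only when $uv$ is an edge of $\mathrm{lk}(\tau,\Sigma)$ — otherwise one of $u,v$ becomes an interior vertex of a PL ball and a Mayer--Vietoris computation gives nonvanishing top reduced homology, a contradiction — so $\tau\cup\{u,v\}\in\Sigma$, i.e.\ $\tau\in\mathrm{lk}(e,\Sigma)$; with the trivial reverse inclusion this is the link condition. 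For $(ii)\Rightarrow(i)$: under the link condition the same analysis shows $\Delta$ is a PL $(d-1)$-ball with $\partial\Delta=M$, whence $M$ is a PL $(d-2)$-sphere, $\bar v\ast M$ is a PL $(d-1)$-ball with boundary $M$, and $\mathcal{C}(e,\Sigma)=\Delta\cup_M(\bar v\ast M)$ is a union of two PL $(d-1)$-balls along their common boundary $(d-2)$-sphere, hence a PL $(d-1)$-sphere and therefore PL-homeomorphic to $\Sigma$. The statement $(iv)$ is treated by the analogous dictionary: under $(ii)$ one checks $\mathrm{st}(u,\Sigma)\cap\mathrm{st}(v,\Sigma)=\mathrm{st}(e,\Sigma)$ and exhibits $\mathrm{st}(u,\Sigma)\cup\mathrm{st}(v,\Sigma)$ as a gluing of two PL $(d-1)$-balls along a PL $(d-2)$-ball lying in the boundary of each, so it is a PL $(d-1)$-ball; and a PL-ball hypothesis on $\mathrm{st}(u,\Sigma)\cup\mathrm{st}(v,\Sigma)$ is fed back, via Newman's theorem and a link-of-links analysis as in $(iii)\Rightarrow(ii)$, to recover the link condition.

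The hard part will be the forward direction $(ii)\Rightarrow(iii)$ (and, in parallel, $(ii)\Rightarrow(iv)$): one must verify, using \emph{only} the link condition, that removing the closed star of the boundary vertex $v$ from the PL ball $\Sigma\setminus\{u\}$ leaves a genuine $(d-1)$-dimensional PL ball — in particular that no lower-dimensional ``whiskers'' survive and that the boundary comes out to be $M$. This is where the combinatorial hypothesis must be converted into PL-topological structure, and it relies on careful bookkeeping of links of faces together with the gluing lemmas above. By contrast the reverse implications are comparatively formal once the Mayer--Vietoris obstruction to deleting an interior vertex of a PL ball is in hand, and $(i)\Rightarrow(iii)$ is essentially immediate from the bridge identity. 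A point to watch throughout is to keep track of the dimension hypothesis in $(iii)$ and $(iv)$, since without it the statements become false (e.g.\ for $\Sigma=\partial\Delta^{d}$ and any edge $e$, the relevant complexes drop dimension).
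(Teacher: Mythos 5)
Your proposal is more ambitious than the paper's proof: the paper simply cites Klee--Kleinschmidt \cite[6.1]{KK} for the equivalence of $(i)$, $(ii)$, $(iii)$ and only supplies the new piece, namely $(iii)\Leftrightarrow(iv)$, which it gets in one line from Newman's theorem because $(\Sigma\setminus\{u\})\setminus\{v\}$ and $\mathrm{st}(u,\Sigma)\cup\mathrm{st}(v,\Sigma)$ are complementary subcomplexes of $\Sigma$. You have all the ingredients for that one-line step (you even list the complementarity and Newman's theorem in your toolkit), but you then propose to handle $(iv)$ by a separate gluing-of-balls analysis, which is unnecessary work; I would replace that entire paragraph by the Newman argument.

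The genuine gap is the implication from the link condition to the topology, i.e.\ $(ii)\Rightarrow(iii)$ (equivalently $(ii)\Rightarrow(iv)$ or $(ii)\Rightarrow(i)$). You correctly flag this as ``the hard part,'' but what you write --- ``careful bookkeeping of links of faces together with the gluing lemmas above'' --- is a description of the difficulty, not an argument. Note in particular that the gluing lemmas you list do not suffice: under $(ii)$ one gets $\mathrm{st}(u,\Sigma)\cap\mathrm{st}(v,\Sigma)=\mathrm{st}(e,\Sigma)$, which is a full-dimensional PL ball, so you are gluing two $(d-1)$-balls along a $(d-1)$-ball, and that is \emph{not} one of your listed facts (two $n$-balls meeting in an $n$-ball need not form an $n$-ball unless the intersection sits correctly relative to the boundaries); the statement you do cite concerns gluing along a common boundary sphere or along an $(n-1)$-ball in the boundary of each. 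Making $(ii)\Rightarrow(iii)/(iv)$ rigorous requires a regular-neighborhood or induction-on-links argument of the kind carried out in \cite[6.1]{KK} and \cite[Theorem 1.4]{NevoMinors}, and until that is supplied the cycle $(i)\Rightarrow(iii)\Rightarrow(ii)\Rightarrow(i)$ is not closed. By contrast, the pieces you do prove are sound: $(i)\Rightarrow(iii)$ via the bridge identity $\mathcal{C}(e,\Sigma)=\mathrm{ast}(\bar v,\mathcal{C}(e,\Sigma))\cup(\bar v\ast M)$ is correct, and your Mayer--Vietoris obstruction (deleting a second, interior vertex from the ball $\mathrm{lk}(\tau,\Sigma)\setminus\{u\}$ creates nonzero top reduced homology, so $\mathrm{lk}(\tau,\Delta)$ could be neither a ball nor a sphere) is a clean and correct way to get $(iii)\Rightarrow(ii)$, arguably more self-contained than the paper's citation.
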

\begin{proof}
The equivalence between $(i)$, $(ii)$, and $(iii)$ is proved in \cite[6.1]{KK}. To see the equivalence $(iii)\Leftrightarrow (iv)$, let us recall Newman's Theorem (see \cite[12.2]{Top}, for instance) that: if $\Gamma$ is a $(d-1)$-dimensional PL-ball that is a subcomplex of a $(d-1)$-dimensional PL-sphere $\Sigma$, then the closure of the complement of $||\Gamma||$ in $||\Sigma||$ is a $(d-1)$-dimensional PL-ball (here $||.||$ denotes the geometric realization). Now, letting $\Gamma$ to be either of the complexes $(\Sigma\setminus\{u\})\setminus\{v\}$ or $\mathrm{st}(u,\Sigma)\cup\mathrm{st}(v,\Sigma)$, the closure of the complement of $||\Gamma||$ in $||\Sigma||$ is the geometric realization of other one. 
\end{proof}

The equivalence between conditions $(i)$ and $(ii)$ of Theorem \ref{contract} in the more general setting of PL-manifolds (without boundary) is shown in \cite[Theorem 1.4]{NevoMinors}. One can show that the condition $(iv)$ of Theorem \ref{contract} is also equivalent to conditions $(i)$ and $(ii)$ in the more general setting of PL-manifolds.

%One can show the equivalence of the condition $(iv)$ of Theorem \ref{contract} to the first two conditions also in general setting. Below, we include an argument for this. 

%\begin{thm}
%Let $\Sigma$ be a PL-manifold without boundary. Let $e=uv$ be an edge of $\Sigma$. Then $\mathcal{C}(e,\Sigma)$ is PL-homeomorphic to $\Sigma$ if and only if $\mathrm{st}(u,\Sigma)\cup\mathrm{st}(v,\Sigma)$ is a $(d-1)$-dimensional PL-ball.
%\end{thm}
%\begin{proof}
%First assume that $\mathcal{C}(e,\Sigma)$ is PL-homeomorphic to $\Sigma$. Then by \cite[Theorem 1.4]{NevoMinors} $\Sigma$ satisfies the link condition with respect to $e$. It is enough to show that $\mathrm{st}(u,\Sigma)\cap\mathrm{st}(v,\Sigma)$ is a $(d-1)$-dimensional PL-ball. However, one has
%\begin{eqnarray*}
%\mathrm{st}(u,\Sigma)\cap\mathrm{st}(v,\Sigma)&=& \mathrm{st}(e,\Sigma)\cup \left(\mathrm{lk}(u,\Sigma)\cap\mathrm{lk}(v,\Sigma)\right)\\
%&=& \mathrm{st}(e,\Sigma)\cup \mathrm{lk}(e,\Sigma)\\
%&=& \mathrm{st}(e,\Sigma).
%\end{eqnarray*}

%\end{proof}

%%%%%%%%%%%%%%%%  %%%%%%%%%%%%%%%%%%%%%%

\section{Classes of PL-spheres}

%---------------------------------------------------------------

\subsection{Vertex-Decomposable and Shrinkable Spheres}

The concept of vertex-decomposability is a strengthening of the concept of shellability that was introduced by Provan and Billera \cite{PB80} in connection with Hirsch conjecture. 

\begin{df}
A pure $(d-1)$-dimensional simplicial complex $\Sigma$ is said to be \emph{vertex-decomposable} if either $\Sigma$ is a $(d-1)$-simplex or there exists a vertex $v$, called a \emph{shedding vertex}, such that 
\begin{enumerate}
\item $\Sigma\setminus \{v\}$ is $(d-1)$-dimensional vertex-decomposable, and
\item $\mathrm{lk}(v,\Sigma)$ is $(d-2)$-dimensional vertex decomposable.
\end{enumerate}
A \emph{shedding order} $v_1,\ldots,v_k$ of a $(d-1)$-dimensional vertex-decomposable complex $\Sigma$ is a linear ordering of $k=f_0(\Sigma)-d$ of its vertices such that for all $j$ the vertex $v_j$ is a shedding vertex for $(\Sigma\setminus \{v_1\})\setminus\ldots\setminus\{v_{j-1}\}$. Notice that if $v_1,\ldots,v_k$ is a shedding order, then the rest of vertices of $\Sigma$ forms a $(d-1)$-simplex.
\end{df}

Provan and Billera \cite[Theorem 3.1.3]{PB80} proved that $2$-dimensional spheres and balls are vertex-decomposable.
It was shown by Klee and Kleinschmidt \cite[p. 743]{KK} that there is a simplicial $4$-polytope with $10$ vertices whose boundary complex is not vertex-decomposable. 

The following fact about vertex-decomposable balls is well-known. For the sake of completeness we include a proof. 

\begin{lem}\label{lemB}
Let $\Sigma$ be a $(d-1)$-dimensional vertex-decomposable ball and $v$ be a shedding vertex of $\Sigma$. Then 
\begin{enumerate}
\item[$(i)$] $v$ lies on the boundary of $\Sigma$, and
\item[$(ii)$] $\Sigma\setminus\{v\}$ is a $(d-1)$-dimensional PL-ball. 
\end{enumerate}

\end{lem}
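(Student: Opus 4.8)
The plan is to deduce (i) from a short Mayer--Vietoris computation together with Cohen--Macaulayness, and (ii) by combining (i) with the classical fact that a shellable pseudomanifold with non-empty boundary is a PL-ball. Recall that vertex-decomposable complexes are shellable by \cite{PB80}, hence Cohen--Macaulay over any field $\mathbb{F}$, and pure by definition; I fix such an $\mathbb{F}$. Note also that $\Sigma\setminus\{v\}=\mathrm{ast}(v,\Sigma)$, and that since $\Sigma$ is not a simplex (it has a shedding vertex) it has at least $d+1$ vertices.

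\textbf{Part (i).} Since $v$ is a shedding vertex, both $\mathrm{lk}(v,\Sigma)$ and $\Sigma\setminus\{v\}$ are vertex-decomposable, hence Cohen--Macaulay. Writing $\Sigma=\mathrm{st}(v,\Sigma)\cup(\Sigma\setminus\{v\})$ as a union of subcomplexes with intersection $\mathrm{lk}(v,\Sigma)$, I would apply the reduced Mayer--Vietoris sequence to it. Since $\mathrm{st}(v,\Sigma)=v\ast\mathrm{lk}(v,\Sigma)$ is a cone and $\Sigma$ is a ball, both are $\mathbb{F}$-acyclic, so the sequence collapses to $\tilde H_j(\mathrm{lk}(v,\Sigma);\mathbb{F})\cong\tilde H_j(\Sigma\setminus\{v\};\mathbb{F})$ for every $j$. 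Because $\Sigma\setminus\{v\}$ is $(d-1)$-dimensional and Cohen--Macaulay, its reduced homology vanishes in all degrees below $d-1$; hence $\tilde H_j(\mathrm{lk}(v,\Sigma);\mathbb{F})=0$ for all $j\le d-2=\dim\mathrm{lk}(v,\Sigma)$, so $\mathrm{lk}(v,\Sigma)$ is $\mathbb{F}$-acyclic. On the other hand, the link of an interior vertex of a simplicial ball is a homology $(d-2)$-sphere, which is not $\mathbb{F}$-acyclic when $d\ge 2$ (the case $d=1$ being vacuous, as a $0$-ball has no shedding vertex). Therefore $v$ is not an interior vertex, i.e.\ $v\in\partial\Sigma$.

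\textbf{Part (ii).} Let $\Gamma:=\Sigma\setminus\{v\}$, which by the definition of a shedding vertex is vertex-decomposable of dimension $d-1$; thus $\Gamma$ is pure of dimension $d-1$ and, by \cite{PB80}, shellable, hence strongly connected. As $\Sigma$ is a manifold with boundary, each of its $(d-2)$-faces lies in at most two facets, and the same then holds in the subcomplex $\Gamma$; therefore $\Gamma$ is a pseudomanifold. Moreover $\partial\Gamma\ne\emptyset$: by (i) $v\in\partial\Sigma$, and $\partial\Sigma$ is a $(d-2)$-sphere, so $\mathrm{ast}(v,\partial\Sigma)$ is non-empty and pure of dimension $d-2$; any facet $\rho$ of it is a $(d-2)$-face of $\Sigma$ not containing $v$, lying in a unique facet $F$ of $\Sigma$, and necessarily $v\notin F$ (otherwise $F=\rho\cup\{v\}$, and then $\rho$ would lie in no facet of $\Gamma$, contradicting purity), so $\rho$ lies in exactly one facet of $\Gamma$, i.e.\ $\rho\in\partial\Gamma$. (Alternatively: were $\partial\Gamma$ empty, every $(d-2)$-face of $\partial\Sigma$ would have to contain $v$, forcing the sphere $\partial\Sigma$ to equal the cone $v\ast\mathrm{lk}(v,\partial\Sigma)$, which is impossible.) Thus $\Gamma$ is a shellable pseudomanifold with non-empty boundary, so it is a PL-ball --- a classical fact of PL-topology (see \cite{Top}, and compare the use of Newman's theorem in the proof of Theorem~\ref{contract}) --- which is statement (ii).

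\textbf{Where the difficulty sits.} The Mayer--Vietoris identity in (i) and the combinatorial verifications in (ii) --- purity, the ``at most two facets'' condition, strong connectivity from shellability, and non-emptiness of $\partial\Gamma$ --- are all routine. The one step that genuinely requires PL-topological input is the last one in (ii), passing from ``shellable pseudomanifold with non-empty boundary'' to ``PL-ball.'' If one wishes to make this self-contained, one can cap $\Sigma$ off by coning its boundary $(d-2)$-sphere from a new vertex to obtain a PL-sphere $\widehat\Sigma$, observe that $\mathrm{st}(v,\widehat\Sigma)$ is a PL-ball and apply Newman's theorem to conclude that $\mathrm{ast}(v,\widehat\Sigma)$ is a PL-ball, and then remove the coned-on part; but this last excision needs its own PL gluing argument (the cone is attached to $\Gamma$ along a sub-ball of its boundary), which is why invoking the classical statement is in practice the cleaner route.
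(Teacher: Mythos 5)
Your proof is correct. Part $(i)$ is essentially identical to the paper's argument: the same Mayer--Vietoris sequence for the cover $\Sigma=\mathrm{st}(v,\Sigma)\cup(\Sigma\setminus\{v\})$, combined with Cohen--Macaulayness of the deletion, shows that $\mathrm{lk}(v,\Sigma)$ has vanishing top homology and hence cannot be the link of an interior vertex. Part $(ii)$, however, takes a genuinely different and more direct route. The paper proceeds by induction on $d$: it shows that every face link of $\Sigma\setminus\{v\}$ is a PL-ball or PL-sphere (using that a shedding vertex of $\Sigma$ restricts to a shedding vertex of the links containing it, as in \cite[Proposition 2.3]{PB80}), concludes that $\Sigma\setminus\{v\}$ is a vertex-decomposable PL-manifold with non-empty boundary, and only then invokes \cite[Theorem 11.4]{Top}. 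You instead verify directly that $\Sigma\setminus\{v\}$ is a shellable pseudomanifold with non-empty boundary --- purity and shellability from vertex-decomposability, the ``at most two facets per ridge'' condition inherited from the ambient ball, and non-emptiness of the boundary via the facet $\rho$ of $\partial\Sigma$ avoiding $v$ --- and then apply the Danaraj--Klee theorem, which is exactly the pseudomanifold form of the same \cite[Theorem 11.4]{Top}. Your route avoids the induction entirely and all verifications are sound (including the observation that a pseudomanifold structure, unlike a manifold structure, passes trivially to a pure full-dimensional subcomplex); what the paper's longer argument buys is the intermediate statement that the deletion is itself a PL-manifold, though that of course also follows a posteriori once one knows it is a ball. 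The one point to keep explicit, as you do, is that the classical ball criterion really is stated for pseudomanifolds and not only for manifolds; since it is, your shortcut is legitimate.
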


\begin{proof}
First notice that for a vertex $u\in\Sigma$ lies on the boundary of $\Sigma$ if and only if $\mathrm{lk}(u,\Sigma)$ is $(d-2)$-dimensional ball and $u\notin\partial \Sigma$ if and only if $\mathrm{lk}(u,\Sigma)$ is $(d-2)$-dimensional sphere. So, in order to prove $(i)$, it suffices to show that for a shedding vertex $v$ one has $\widetilde{H}_{d-2}(\mathrm{lk}(v,\Sigma),\mathbb{Z})=0$. Now, since $\Sigma$ is acyclic and $\Sigma\setminus \{v\}$ is $(d-1)$-connected (see \cite[11.2]{Top}), the Mayer-Vietoris sequence for the pair $\mathrm{st}(v,\Sigma)$ and $\Sigma\setminus\{v\}$ shows that $\widetilde{H}_{d-2}(\mathrm{lk}(v,\Sigma),\mathbb{Z})\cong\widetilde{H}_{d-2}(\Sigma\setminus\{v\},\mathbb{Z})=0$.
This complete the argument for part $(i)$. 

In order to prove $(ii)$, we use induction on $d$. Note that for $d=2$ the statement clearly holds. For a non-empty face $\sigma\in\Sigma\setminus\{v\}$ we observe that if $v\notin\mathrm{lk}(\sigma,\Sigma)$, then $\mathrm{lk}(\sigma,\Sigma)=\mathrm{lk}(\sigma,\Sigma\setminus\{v\})$ is either a PL-ball or a PL-sphere. On the other hand, if $v\in\mathrm{lk}(\sigma,\Sigma)$, then it is not difficult to show that $v$ is a shedding vertex for $\mathrm{lk}(\sigma,\Sigma)$ (see the proof of Proposition 2.3 in \cite{PB80}, for instance). Therefore, $\mathrm{lk}(\sigma,\Sigma)\setminus\{v\}=\mathrm{lk}(\sigma,\Sigma\setminus\{v\})$ must be a PL-ball. In particular, $\Sigma\setminus\{v\}$ is a vertex-decomposable PL-manifold (with non-empty boundary) and hence a PL-ball\cite[Theorem 11.4]{Top}. 

\end{proof}
While any vertex-decomposable sphere contain an edge whose contraction results in a PL-sphere (see \cite[6.2]{KK}, for instance), the class of vertex-decomposable spheres is not closed under edge contractions. By focusing on some essential properties of vertex-decomposable spheres, we extend this family to a family of spheres that behave more nicely under edge contraction.

%---------------------------------------------------------------

\begin{df}
A $(d-1)$-dimensional PL-sphere $\Sigma$ is said to be \emph{shrinkable} if either $\Sigma$ is the boundary of the $d$-simplex or there exists a facet $\sigma$ of $\Sigma$ so that $V(\Sigma)\setminus V(\sigma)$ can be linearly ordered $v_1,v_2,\ldots,v_k$ (called a \emph{shrinking order}) so that for any $1\leq j\leq k$
\begin{enumerate}
\item the subgraph $G_j$ of the $1$-skeleton of $\Sigma$ induced by $v_1,\ldots,v_j$ is connected,
\item The complex \(\mathcal{N}_j(\Sigma):=\bigcup_{i\leq j}\mathrm{st}(v_i,\Sigma)\) is a $(d-1)$-dimensional PL-ball. 
\end{enumerate}
\end{df}

\begin{thm}
Let $\Sigma$ be a $(d-1)$-dimensional vertex-decomposable ball with a shedding order $v_1,\ldots,v_k$. Let $v_0$ be a vertex outside $\Sigma$ and let $\Gamma=(v_0\ast \partial \Sigma)\cup\Sigma$ be the sphere obtined from taking a cone with apex $v_0$ over the boundary of $\Sigma$. Then $\Gamma$ is shrinkable with a shrinking order given by $v_0,v_1,\ldots,v_k$. In particular, every vertex-decomposable sphere is shrinkable and for all $d\geq 5$ there exist $(d-1)$-dimensional shrinkable but not vertex-decomposable spheres. 
\end{thm}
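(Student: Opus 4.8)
The plan is to verify directly that the order $v_0,v_1,\ldots,v_k$ satisfies the two conditions in the definition of shrinkability for $\Gamma$, using the facet $\sigma$ of $\Gamma$ obtained as follows. Since $v_1,\ldots,v_k$ is a shedding order for $\Sigma$, removing these vertices leaves a $(d-1)$-simplex, whose facet $\sigma$ is then a facet of $\Gamma$ as well, and $V(\Gamma)\setminus V(\sigma)=\{v_0,v_1,\ldots,v_k\}$. So the combinatorics of the indexing works out, and it remains to check connectivity of the induced subgraphs $G_j$ and the PL-ball property of the complexes $\mathcal{N}_j(\Gamma)$.

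For the connectivity condition, I would argue as follows. For $j=0$ the graph $G_0$ is the single vertex $v_0$, which is connected. For $j\geq 1$, note that $\mathcal{N}_{j-1}(\Sigma)=\bigcup_{i<j}\mathrm{st}(v_i,\Sigma)$ records the part of $\Sigma$ already shed after removing $v_1,\ldots,v_{j-1}$ -- more precisely, by Lemma \ref{lemB}(ii), each partial deletion $\Sigma_j:=(\Sigma\setminus\{v_1\})\setminus\cdots\setminus\{v_j\}$ is a PL-ball, and $v_{j+1}$ being a shedding vertex of $\Sigma_j$ lies on $\partial\Sigma_j$. The key point is that in $\Gamma$ the vertex $v_0$ is adjacent to every boundary vertex of $\Sigma$, and in particular to $v_1$ (which lies on $\partial\Sigma$ by Lemma \ref{lemB}(i)); more generally $v_{j}$ is a vertex of $\partial\Sigma_{j-1}$, and one shows that the edge $v_0 v_j$ lies in $\Gamma$ because $v_j\in\partial\Sigma\subseteq v_0\ast\partial\Sigma$ is not immediate -- rather $v_j$ lies on the boundary of the \emph{remaining} ball $\Sigma_{j-1}$, whose boundary is a subsphere of $\partial\Sigma$ only after the shed part is accounted for. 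So the clean statement is: $v_j$ is joined to some earlier $v_i$ ($i<j$) or to $v_0$ inside $\Gamma$, which follows since $\partial\Sigma_{j-1}$ is connected (it is a PL-sphere, $d\geq 2$) and is covered by the link of $v_0$ together with the already-removed stars; chasing this gives that $G_j$ is connected by induction.

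For the PL-ball condition I would track what $\mathcal{N}_j(\Gamma)$ actually is. Since $\mathrm{st}(v_0,\Gamma)=v_0\ast\partial\Sigma$ is a cone, it is a $(d-1)$-PL-ball, giving $j=0$. For $j\geq 1$ the natural claim is
\[
\mathcal{N}_j(\Gamma)=(v_0\ast\partial\Sigma)\cup\mathrm{st}(v_1,\Sigma)\cup\cdots\cup\mathrm{st}(v_j,\Sigma)=\Gamma\setminus(\mathrm{int}\,\Sigma_j),
\]
i.e. $\mathcal{N}_j(\Gamma)$ is obtained from the sphere $\Gamma$ by deleting the interior of the PL-ball $\Sigma_j$ sitting inside it. By Lemma \ref{lemB}(ii) (applied iteratively, using the fact from the proof of Lemma \ref{lemB} that shedding vertices descend to links, so that each $\Sigma_i$ is vertex-decomposable) $\Sigma_j$ is a $(d-1)$-dimensional PL-ball and a subcomplex of the PL-sphere $\Gamma$; Newman's Theorem (as recalled in the proof of Theorem \ref{contract}) then says the closure of the complement of $||\Sigma_j||$ in $||\Gamma||$ is a $(d-1)$-dimensional PL-ball. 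Identifying that complement with $||\mathcal{N}_j(\Gamma)||$ -- which is a matter of checking that every facet of $\Gamma$ not in $\Sigma_j$ contains one of $v_0,v_1,\ldots,v_j$ -- finishes the verification of condition (2). Combining the two conditions shows $\Gamma$ is shrinkable with the stated order.

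The last two assertions are corollaries. For ``every vertex-decomposable sphere is shrinkable'': if $\Sigma'$ is a vertex-decomposable $(d-1)$-sphere with shedding order $w_1,\ldots,w_m$, then after removing $w_1,\ldots,w_{m-1}$ one is left with the boundary of a $d$-simplex, and in particular $\Sigma'\setminus\{w_1\}$ is a vertex-decomposable ball (by Lemma \ref{lemB}(ii)) whose boundary is $\mathrm{lk}(w_1,\Sigma')$, so $\Sigma'=(w_1\ast\partial(\Sigma'\setminus\{w_1\}))\cup(\Sigma'\setminus\{w_1\})$ has exactly the form of the $\Gamma$ in the theorem (here $\Sigma = \Sigma'\setminus\{w_1\}$, $v_0 = w_1$), hence is shrinkable; the degenerate case where $\Sigma'$ is itself the boundary of the $d$-simplex is covered by the base case of the definition. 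For the existence of shrinkable non-vertex-decomposable spheres in each dimension $d-1\geq 4$, I would cite a known example of a vertex-decomposable $(d-1)$-ball $\Sigma$ (for $d-1\geq 4$ such exist by, e.g., iterating the Klee--Kleinschmidt phenomenon, or more simply taking a stacked/shellable ball and coning) whose boundary-cone $\Gamma$ fails vertex-decomposability, and point to Section \ref{Sec:Ex}; this is where I expect the real work to be, since one must exhibit a concrete $\Gamma$ and certify non-vertex-decomposability, whereas the shrinkability half is the soft Newman-theorem argument above. The main obstacle overall is the bookkeeping in the PL-ball step -- precisely matching $\mathcal{N}_j(\Gamma)$ with the Newman complement of $\Sigma_j$ -- together with producing the separating example for the ``not vertex-decomposable'' clause.
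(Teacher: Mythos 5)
For the main claim your argument is essentially the paper's: you identify $\mathcal{N}_j(\Gamma)$ with the Newman complement in $\Gamma$ of the iterated deletion $D_j=(\Sigma\setminus\{v_1\})\setminus\cdots\setminus\{v_j\}$, which is a $(d-1)$-dimensional PL-ball by Lemma \ref{lemB}$(ii)$ applied repeatedly, and you get connectivity of $G_j$ from the fact that the shedding vertex $v_{j}$ lies on $\partial D_{j-1}=\partial\mathcal{N}_{j-1}(\Gamma)\subseteq\bigcup_{i\le j-1}\mathrm{st}(v_i,\Gamma)$ and is therefore adjacent to some earlier $v_i$ (your detour through connectedness of $\partial D_{j-1}$ is unnecessary, but the conclusion is right). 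The deduction that every vertex-decomposable sphere is shrinkable, via $\Gamma=(v\ast\partial(\Gamma\setminus\{v\}))\cup(\Gamma\setminus\{v\})$ for a shedding vertex $v$, also matches the paper.

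The genuine gap is the final clause: the existence, for every $d\geq 5$, of a $(d-1)$-dimensional shrinkable sphere that is not vertex-decomposable. You explicitly defer this to ``a known example,'' and the candidates you float do not work: coning over the boundary of a stacked or shellable ball produces a stacked (hence vertex-decomposable) sphere, and the Klee--Kleinschmidt $4$-polytope gives a non-vertex-decomposable $3$-sphere but no vertex-decomposable $4$-ball bounding it, which is what your construction would require. The paper closes this gap with a specific three-step construction: take a PL-sphere $\Sigma$ whose first barycentric subdivision is not shellable (such exist in all dimensions $\geq 3$); by Pachner's theorem realize $\Sigma$ as the boundary of a shellable ball $\Gamma$; then $\mathrm{sd}(\Gamma)$ is vertex-decomposable because barycentric subdivisions of shellable complexes are vertex-decomposable, so the cone $\Lambda=(w\ast\partial\,\mathrm{sd}(\Gamma))\cup\mathrm{sd}(\Gamma)$ is shrinkable by the first part of the theorem, yet $\mathrm{lk}(w,\Lambda)=\mathrm{sd}(\Sigma)$ is not shellable, hence not vertex-decomposable, so $\Lambda$ cannot be vertex-decomposable since links in vertex-decomposable complexes are vertex-decomposable. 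Without some such construction the ``not vertex-decomposable'' half of the statement remains unproved.
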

\begin{proof}
 First we observe that part $(ii)$ of Lemma \ref{lemB} implies that for any $0\leq j\leq k$, the complex \[D_j=(\Gamma\setminus\{v_0\})\setminus\{v_1\}\setminus\ldots\setminus\{v_j\}=(\Sigma\setminus\{v_1\})\setminus\ldots\setminus\{v_j\}\] is a $(d-1)$-dimensional vertex-decomposable ball. By Newman's Theorem the closure $X$ of the complement of $||D_j||$ in $||\Gamma||$ is a $(d-1)$-dimensional ball. However, the complex \(\mathcal{N}_j(\Gamma)=\bigcup_{0\leq i\leq j}\mathrm{st}(v_i,\Gamma)\) is a triangulation of $X$. Now, since $v_{j+1}$ is a shedding vertex of $D_j$, it follows from Lemma \ref{lemB} that $v_{j+1}$ has to lie on the boundary of $D_j$ which is also the boundary of $N_j(\Gamma)$. This shows that the subgraph $G_j$ of the $1$-skeleton of $\Sigma$ induced by $v_1,\ldots,v_j$ is connected. Thus $v_0,v_1,\ldots,v_k$ is a shrinking order for $\Gamma$. 
 
 Now observe that if $\Gamma$ is a vertex-decomposable sphere with a shedding vertex $v$, then $\Sigma=\Gamma\setminus\{v\}$ is a vertex-decomposable ball and $\Gamma=(v\ast \partial \Sigma)\cup\Sigma$. Hence, every vertex-decomposable sphere is shrinkable. 
 
Finally, let $\Sigma$ be a PL-sphere whose first barycentric subdivision is not shellable. Such spheres exist in all dimensions greater than or equal $3$. By a result of Pachner \cite[Theorem 5.8]{Pachner}, every PL-sphere can be realized as the boundary complex to a shellable ball. So, we may assume that there exists a shellable ball $\Gamma$ such that $\partial\Gamma=\Sigma$. Let $\mathrm{sd}(\Gamma)$ denote the barycentric subdivision of $\Gamma$. Since the barycentric subdivision of every shellable complex is vertex-decomposable \cite[Corollary 3.3.2]{PB80}, $\mathrm{sd}(\Gamma)$ is a vertex-decomposable ball and the sphere $\Lambda=(w\ast\mathrm{sd}(\Gamma))\cup \mathrm{sd}(\Gamma)$ is shrinkable. However, $\Lambda$ is not vertex-decomposable. Because $\mathrm{lk}(w,\Lambda)= \mathrm{sd}(\Sigma)$ is not vertex-decomposable while the link of every face in a vertex-decomposable complex is vertex-decomposable \cite[Proposition 2.3]{PB80}.

\end{proof}

%---------------------------------------------------------------

\subsection{Strong Versions}
Next, we present the precise definition of the subclass of vertex-decomposable spheres that is considered in Adiprasito's paper (see the paragraph preceding \cite[Theorem 6.3]{Karim}, see also \cite[Section 6]{Adi19b} for a clarified version).
\begin{df}
A $(d-1)$-dimensional vertex-decomposable sphere $\Sigma$ is said to be \emph{strongly vertex-decomposable} if either $d$ is less than or equal $3$, or there exists a shedding order $v_1,\ldots,v_k$ (that we call a \emph{strong shedding order}) such that, for any $1\leq j<k$, the complex $\mathrm{lk}(v_{j+1},\partial \mathcal{N}_j(\Sigma))$ is strongly vertex-decomposable, where \(\mathcal{N}_j(\Sigma):=\bigcup_{i\leq j}\mathrm{st}(v_i,\Sigma)\).
\end{df}

Notice that since every shedding order is a shrinking order, \(\mathcal{N}_j(\Sigma):=\bigcup_{i\leq j}\mathrm{st}(v_i,\Sigma)\) is a $(d-1)$-dimensional ball for all $1\leq j<k$ and it makes sense to talk about $\partial\mathcal{N}_j(\Sigma)$. 

We also consider the strong version of shrinkability. We define the class of \emph{strongly shrinkable} spheres by replacing ``vertex-decomposable'' by ``shrinkable'' in the definition above. Clearly, every strongly vertex-decomposable sphere is strongly shrinkable.

\begin{thm}\emph{(Adiprasito \cite[Theorem 6.3]{Karim})}\label{Kar}\textbf{.}
Let $\Sigma$ be a strongly vertex-decomposable sphere and $\mathbb{F}$ be a field of characteristic zero. Then $\Sigma$ is a Lefschetz complex over $\mathbb{F}$.
\end{thm}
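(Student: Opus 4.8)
The plan is to avoid Adiprasito's Lefschetz-theoretic argument altogether and instead reduce Theorem~\ref{Kar} to already-available Lefschetz results. The combinatorial heart of the matter is the claim that \emph{every strongly shrinkable sphere is strongly edge-decomposable} in the sense of Nevo~\cite{NevoMinors}. Granting this, the theorem follows at once: every strongly vertex-decomposable sphere is strongly shrinkable, and Babson and Nevo~\cite{BabNevo} --- and, in greater generality, Murai~\cite{Murai} --- proved that strongly edge-decomposable spheres are Lefschetz complexes over every field of characteristic zero. (One may instead simply invoke \cite[Theorem 6.3]{Karim}; the content of this note is that the route above is available.) Recall that a PL-sphere $\Sigma$ of dimension $d-1$ is \emph{strongly edge-decomposable} if it is the boundary of a simplex, or there is an edge $e$ of $\Sigma$ satisfying the link condition with respect to $e$ such that both $\mathrm{lk}(e,\Sigma)$ and $\mathcal{C}(e,\Sigma)$ are strongly edge-decomposable.

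To prove the combinatorial claim I would use a double induction: on the dimension $d-1$, and, within a fixed dimension, on the number of vertices. The base case $d\leq 3$ holds because PL-spheres of dimension at most $2$ are well known to be strongly edge-decomposable. For the inductive step, let $\Sigma$ be strongly shrinkable of dimension $d-1\geq 3$; if $\Sigma$ is the boundary of a simplex we are done, so assume it has a shrinking order $v_1,\ldots,v_k$ with $k\geq 2$ and fix the edge $e=\{v_1,v_2\}$ --- this is an edge because the subgraph of the $1$-skeleton induced by $\{v_1,v_2\}$ is required to be connected. The link condition for $e$ holds: by definition of a shrinking order, $\mathcal{N}_2(\Sigma)=\mathrm{st}(v_1,\Sigma)\cup\mathrm{st}(v_2,\Sigma)$ is a $(d-1)$-dimensional PL-ball, so the implication $(iv)\Rightarrow(ii)$ of Theorem~\ref{contract} applies, and moreover $(i)$ gives that $\mathcal{C}(e,\Sigma)$ is a PL-sphere. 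Since $\mathcal{N}_1(\Sigma)=\mathrm{st}(v_1,\Sigma)=v_1\ast\mathrm{lk}(v_1,\Sigma)$ is the cone over a $(d-2)$-sphere, its boundary is $\mathrm{lk}(v_1,\Sigma)$, and hence
\[
\mathrm{lk}(e,\Sigma)=\mathrm{lk}\bigl(v_2,\mathrm{lk}(v_1,\Sigma)\bigr)=\mathrm{lk}\bigl(v_2,\partial\mathcal{N}_1(\Sigma)\bigr),
\]
which is strongly shrinkable directly by the case $j=1$ of the definition. As $\mathrm{lk}(e,\Sigma)$ has dimension $d-3<d-1$, the dimension induction shows it is strongly edge-decomposable.

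It remains to verify that $\mathcal{C}(e,\Sigma)$ is strongly shrinkable; since it has exactly one vertex fewer than $\Sigma$, the vertex-count induction then completes the argument. This is where the real work lies. I expect the strong shrinking order for $\mathcal{C}(e,\Sigma)$ to be the one induced by $v_1,\ldots,v_k$ after identifying $v_1$ and $v_2$ --- a sequence of $k-1$ vertices. Proving this amounts to a handful of compatibility statements: that $\mathcal{C}(e,-)$ commutes with forming the unions of stars $\mathcal{N}_j(-)$ (the identifications caused by $e$ affect only the region near $e$), that it commutes with passing to the boundary of such a ball, and that contracting an edge satisfying the link condition inside a PL-ball again produces a PL-ball --- the ball analogue of Theorem~\ref{contract}, obtained from Newman's theorem exactly as in its proof. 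Once these are in place, the connectivity of the induced graphs and the ball property of the complexes $\mathcal{N}_j$ for the new order are inherited from those for $\Sigma$, and the recursive clause survives because the links $\mathrm{lk}(v_{j+1},\partial\mathcal{N}_j(\mathcal{C}(e,\Sigma)))$ agree, up to the analogous contraction, with the links $\mathrm{lk}(v_{j+1},\partial\mathcal{N}_j(\Sigma))$ that are strongly shrinkable by hypothesis. The delicate point --- and the main obstacle --- is checking that no unintended identifications occur when one contracts $e$ inside these subcomplexes, which is precisely where the link condition is used.
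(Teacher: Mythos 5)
Your overall strategy is exactly the paper's: reduce to the statement that every strongly shrinkable sphere is strongly edge-decomposable (the paper's Theorem~\ref{main}), then invoke Babson--Nevo or Murai. Your treatment of the edge $e=v_1v_2$, the link condition via Theorem~\ref{contract}$(iv)\Rightarrow(ii)$, and the identification $\mathrm{lk}(e,\Sigma)=\mathrm{lk}(v_2,\partial\mathcal{N}_1(\Sigma))$ all match the paper. But the step you yourself flag as ``the main obstacle'' --- that $\mathcal{C}(e,\Sigma)$ is strongly shrinkable --- is left as a wish list of compatibility statements (``$\mathcal{C}(e,-)$ commutes with forming unions of stars,'' ``no unintended identifications occur,'' ``contracting inside a PL-ball gives a PL-ball'') rather than an argument, so as written this is a genuine gap.

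The paper closes that gap by never analysing how contraction interacts with the stars at all. The key observation is that the \emph{deletions} are literally unchanged by the contraction: for $2\leq j\leq k$,
\[
(\mathcal{C}(e,\Sigma)\setminus\{v_2\})\setminus\cdots\setminus\{v_j\}=(\Sigma\setminus\{v_1\})\setminus\cdots\setminus\{v_j\},
\]
because every face of $\mathcal{C}(e,\Sigma)$ avoiding $v_2$ is a face of $\Sigma$ avoiding both $v_1$ and $v_2$, and conversely. The right-hand side is a PL-ball (it is the closed complement of $\mathcal{N}_j(\Sigma)$ in $\Sigma$, by Newman's theorem), so applying Newman's theorem once more, now inside the PL-sphere $\mathcal{C}(e,\Sigma)$, shows that its closed complement $\bigcup_{2\leq i\leq j}\mathrm{st}(v_i,\mathcal{C}(e,\Sigma))$ is a PL-ball. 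The same identity gives $\partial\mathcal{N}_j(\mathcal{C}(e,\Sigma))=\partial\mathcal{N}_j(\Sigma)$ for $j\geq 2$, so the links $\mathrm{lk}(v_{j+1},\partial\mathcal{N}_j(\cdot))$ governing the recursive ``strong'' condition coincide verbatim with those for $\Sigma$ --- there is no ``analogous contraction'' to worry about, and the delicate point you anticipated simply does not arise. With that substitution your induction (the paper inducts on the length of the shrinking order rather than the vertex count, which is equivalent here) goes through.
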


%---------------------------------------------------------------

The following class of PL-spheres is introduced by Nevo \cite{NevoMinors}. 

\begin{df}
A $(d-1)$-dimensional sphere $\Sigma$ is said to be \emph{strongly edge-decomposable} if either $\Sigma$ is the boundary of $d$-simplex or else there exists an edge $e\in\Sigma$ such that $\Sigma$ satisfies the link condition with respect to $e$ and $\mathrm{lk}(e,\Sigma)$ and $\mathcal{C}(e,\Sigma)$ are strongly edge-decomposable. 
\end{df}

All $2$-dimensional spheres are strongly edge-decomposable. Babson and Nevo \cite{BabNevo} proved that strongly edge-decomposable spheres are Lefschetz over any field of characteristic zero. This was generalized by Murai \cite{Murai} where he extended the definition of strong edge-decomposition to complexes that are not necessarily PL-spheres.

\begin{thm}\emph{(Murai \cite[Corollary 3.5]{Murai})}\label{Mur}\textbf{.}
Let $\Sigma$ be a strongly edge-decomposable sphere and $\mathbb{F}$ be an infinite field. Then $\Sigma$ is a Lefschetz complex over $\mathbb{F}$.
\end{thm}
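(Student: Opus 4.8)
The plan is to induct on the number of vertices $f_0(\Sigma)$, the base case being $\Sigma=\partial\Delta^d$, the boundary of the $d$-simplex. In the base case the Artinian reduction $\mathbb{F}[\partial\Delta^d]/\Theta$ by a generic linear system of parameters has Hilbert function $(1,1,\dots,1)$, and for a generic linear form $\omega\in R_1$ multiplication by $\omega$ is a bijection from $(\mathbb{F}[\partial\Delta^d]/\Theta)_i$ onto $(\mathbb{F}[\partial\Delta^d]/\Theta)_{i+1}$ for each $0\le i<d$; hence every power $\omega^{d-2j}\colon(\mathbb{F}[\partial\Delta^d]/\Theta)_j\to(\mathbb{F}[\partial\Delta^d]/\Theta)_{d-j}$ is bijective. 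Both the existence of the generic linear system of parameters and the existence of such an $\omega$ use that $\mathbb{F}$ is infinite.

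For the inductive step, let $e=uv$ be the distinguished edge, so that $\Sigma$ satisfies the link condition with respect to $e$ and both $\mathrm{lk}(e,\Sigma)$ and $\mathcal{C}(e,\Sigma)$ are strongly edge-decomposable. By Theorem~\ref{contract} the contraction $\mathcal{C}(e,\Sigma)$ is again a $(d-1)$-sphere, and $\mathrm{lk}(e,\Sigma)$ is a $(d-3)$-sphere; both have strictly fewer vertices than $\Sigma$ (indeed $\mathrm{lk}(e,\Sigma)$ is supported on $V(\Sigma)\setminus\{u,v\}$ and $\mathcal{C}(e,\Sigma)$ has one vertex fewer), so by the inductive hypothesis both are Lefschetz complexes over $\mathbb{F}$. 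The crux is a comparison of Artinian reductions: one constructs, for suitably generic and mutually compatible linear systems of parameters $\Theta,\Theta',\Theta''$ and a single linear form $\omega$, a short exact sequence of graded $\mathbb{F}$-vector spaces
\[
0\ \longrightarrow\ \bigl(\mathbb{F}[\mathrm{lk}(e,\Sigma)]/\Theta''\bigr)(-1)\ \longrightarrow\ \mathbb{F}[\Sigma]/\Theta\ \longrightarrow\ \mathbb{F}[\mathcal{C}(e,\Sigma)]/\Theta'\ \longrightarrow\ 0
\]
which is equivariant for multiplication by $\omega$. On Hilbert functions this is precisely Nevo's identity $h_i(\Sigma)=h_i(\mathcal{C}(e,\Sigma))+h_{i-1}(\mathrm{lk}(e,\Sigma))$, which one verifies directly from the description of $\mathcal{C}(e,\Sigma)$ and the link condition.

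Granting such a sequence, the conclusion is linear algebra. A bookkeeping of degrees shows that, after the shift by $(-1)$, the strong Lefschetz property of the $(d-3)$-sphere $\mathrm{lk}(e,\Sigma)$---whose governing isomorphisms are $\omega^{(d-2)-2k}$ from degree $k$ to degree $(d-2)-k$---becomes exactly the statement that $\omega^{d-2j}$ is bijective from degree $j$ to degree $d-j$ of $\bigl(\mathbb{F}[\mathrm{lk}(e,\Sigma)]/\Theta''\bigr)(-1)$; and the very same power $\omega^{d-2j}$ governs the strong Lefschetz property of $\mathcal{C}(e,\Sigma)$, which has the same dimension $d-1$ as $\Sigma$. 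Hence, for each $0\le j\le\lfloor d/2\rfloor$, applying $\omega^{d-2j}$ to the short exact sequence in degrees $j$ and $d-j$ produces a commutative diagram with exact rows whose two outer vertical maps are bijective; the short five lemma then forces the middle vertical map $\omega^{d-2j}\colon(\mathbb{F}[\Sigma]/\Theta)_j\to(\mathbb{F}[\Sigma]/\Theta)_{d-j}$ to be bijective. Since $\Sigma$ is Cohen-Macaulay with a symmetric $h$-vector (Dehn-Sommerville), this is exactly the strong Lefschetz property for $\Sigma$, completing the induction.

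The main obstacle is the construction of the $\omega$-equivariant short exact sequence together with the simultaneously generic choice of parameters: this is the technical heart of the papers of Babson and Nevo and of Murai. What allows Murai's version to work over an arbitrary infinite field, rather than only in characteristic zero as in Babson and Nevo, is that it avoids any Hard Lefschetz-type input from geometry and instead analyses generic initial ideals and monomial bases adapted to the edge $e$; checking that these bases split compatibly along the contraction, and that one generic $\omega$ can be chosen to witness the Lefschetz property of all three complexes at once, are the steps that demand genuine work.
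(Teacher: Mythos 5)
First, a point of comparison: the paper does not prove Theorem \ref{Mur} at all --- it is imported verbatim from Murai's paper and used as a black box --- so there is no internal proof to measure your argument against. Judged on its own, your outline reproduces the Babson--Nevo inductive scheme (induct on the number of vertices, base case $\partial\Delta^d$, pass the Lefschetz property from $\mathcal{C}(e,\Sigma)$ and $\mathrm{lk}(e,\Sigma)$ up to $\Sigma$), and your base case, the degree bookkeeping after the shift $(-1)$, and the identity $h_i(\Sigma)=h_i(\mathcal{C}(e,\Sigma))+h_{i-1}(\mathrm{lk}(e,\Sigma))$ are all correct. The problem is that the one step carrying all the content --- the $\omega$-equivariant short exact sequence of Artinian reductions --- is asserted rather than constructed, and it cannot be obtained from the naive maps. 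The substitution $x_u\mapsto x_v$ does not induce a ring surjection $\mathbb{F}[\Sigma]\to\mathbb{F}[\mathcal{C}(e,\Sigma)]$: already for the octahedron with $e=uv$ and $v'$ the antipode of $v$, the monomial $x_vx_{v'}$ lies in $I_\Sigma$ but $\{v,v'\}$ is a face of $\mathcal{C}(e,\Sigma)$ (it is the image of the face $\{u,v'\}$). Moreover the three complexes live on three different vertex sets, so a single linear form $\omega$ does not literally act on all three terms, and arranging three linear systems of parameters that are simultaneously generic \emph{and} compatible with whatever maps one does construct is precisely the delicate point; a match of Hilbert functions does not by itself produce an exact sequence of modules. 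Deferring this as ``the technical heart'' defers the entire theorem.

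A secondary issue is attribution. The route you sketch --- relating the face rings of $\Sigma$, $\mathcal{C}(e,\Sigma)$ and $\mathrm{lk}(e,\Sigma)$ via a decomposition such as $\Sigma=(\Sigma\setminus\{u\})\cup\mathrm{st}(u,\Sigma)$ and a five-lemma-type argument --- is essentially the Babson--Nevo proof, and carrying it out as they do yields the result only in characteristic zero. Murai's proof over an arbitrary infinite field is genuinely different: it works with generic initial ideals and algebraic shifting, showing that $\mathrm{gin}(I_\Sigma)$ is determined by those of the contraction and the link and reading the strong Lefschetz property off the structure of the shifted ideal. So to complete the argument you must either carry out the Babson--Nevo construction in full (and then strengthen it to positive characteristic) or switch to Murai's shifting machinery; as written, neither is done.
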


Now, we show that strongly vertex-decomposable spheres are strongly edge-decomposable. 

\begin{thm}\label{main}
Every strongly shrinkable sphere is strongly edge-decomposable. Consequently, every strongly vertex-decomposable sphere is strongly edge-decomposable. 
\end{thm}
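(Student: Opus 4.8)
The plan is to induct on the dimension $d-1$, and for fixed dimension, on the number of vertices of $\Sigma$. The base case $d\le 3$ is immediate: all spheres of dimension $\le 2$ are strongly edge-decomposable, as noted in the text. For the inductive step, let $\Sigma$ be a strongly shrinkable sphere with a strong shrinking order $v_1,\dots,v_k$ relative to a facet $\sigma$. The goal is to exhibit a single edge $e$ of $\Sigma$ such that $\Sigma$ satisfies the link condition with respect to $e$, and both $\operatorname{lk}(e,\Sigma)$ and $\mathcal{C}(e,\Sigma)$ are strongly shrinkable (hence, by the two inductions, strongly edge-decomposable). The natural candidate is an edge incident to $v_1$: since $\mathcal{N}_1(\Sigma)=\operatorname{st}(v_1,\Sigma)$ is a PL-ball, $v_1$ lies in a "cone-like" position, and contracting $v_1$ along a suitable edge $e=v_1 w$ should peel it off cleanly.

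First I would analyze the link condition for $e=v_1 w$. The key observation is that $\operatorname{st}(v_1,\Sigma)$ is a PL-ball $B$ with $v_1$ an interior vertex in the sense that $\operatorname{lk}(v_1,\Sigma)=\partial(\operatorname{lk}(v_1,\Sigma)\ast\{v_1\})$'s relevant boundary — more precisely $\operatorname{lk}(v_1,\Sigma)$ is a $(d-2)$-sphere (since $v_1\notin\sigma$ and $\Sigma$ is a sphere, $\operatorname{lk}(v_1,\Sigma)$ is a sphere). I would choose $w\in\operatorname{lk}(v_1,\Sigma)$ so that $\operatorname{lk}(w,\operatorname{lk}(v_1,\Sigma))$ is itself as simple as possible; in fact, any edge $v_1 w$ with $w$ a vertex of $\operatorname{lk}(v_1,\Sigma)$ automatically satisfies the link condition when $v_1$ has the property that $\operatorname{st}(v_1,\Sigma)\cup\operatorname{st}(w,\Sigma)$ is a ball, by Theorem~\ref{contract}(iv). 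So the task reduces to: find $w$ with $\operatorname{st}(v_1,\Sigma)\cup\operatorname{st}(w,\Sigma)$ a PL-ball, and such that $\mathcal{C}(v_1 w,\Sigma)$ and $\operatorname{lk}(v_1 w,\Sigma)$ inherit strong shrinkability. The contraction $\mathcal{C}(v_1 w,\Sigma)$ has shrinking order $v_2,\dots,v_k$ (with $v_1$ removed, $w$ absorbing its star), and one checks $\mathcal{N}_j(\mathcal{C}(v_1 w,\Sigma))$ corresponds to $\mathcal{N}_{j+1}(\Sigma)$ up to the identification; strong shrinkability of the boundary links $\operatorname{lk}(v_{j+2},\partial\mathcal{N}_j(\mathcal{C}(e,\Sigma)))$ then follows from that of $\operatorname{lk}(v_{j+2},\partial\mathcal{N}_{j+1}(\Sigma))$, which is part of the hypothesis on $\Sigma$ and has lower dimension. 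Meanwhile $\operatorname{lk}(v_1 w,\Sigma)=\operatorname{lk}(w,\operatorname{lk}(v_1,\Sigma))$-type complex; I expect it equals $\operatorname{lk}(v_1 w,\Sigma)$ which is a $(d-3)$-sphere and, crucially, can be identified with $\operatorname{lk}(w,\partial\mathcal{N}_1(\Sigma))$ or with a link appearing in the recursive data, so strong shrinkability of it follows from the dimension induction applied to the hypothesis on $\partial\mathcal{N}_1(\Sigma)$ — this is exactly why the definition of strong shrinkability records those boundary links.

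The main obstacle I anticipate is the bookkeeping that the recursive data of $\Sigma$ (the strong shrinking order together with the strongly shrinkable boundary links $\operatorname{lk}(v_{j+1},\partial\mathcal{N}_j(\Sigma))$) really does restrict correctly to both $\mathcal{C}(e,\Sigma)$ and $\operatorname{lk}(e,\Sigma)$. For the contraction this is a shift of index and an application of the fact (implicit in the proof of the shrinkability theorem via Newman's theorem) that $\mathcal{N}_j$ and its boundary behave well under deletions; for the link $\operatorname{lk}(e,\Sigma)$ one must verify that its own strong shrinking order can be read off from the intersection of the data of $\Sigma$ with the star of $e$, and that the dimension drops by two so the induction hypothesis applies. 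I would isolate this as a lemma: \emph{if $\Sigma$ is strongly shrinkable and $e=v_1 w$ is chosen as above, then $\mathcal{C}(e,\Sigma)$ is strongly shrinkable and $\operatorname{lk}(e,\Sigma)$ is strongly shrinkable.} Granting the lemma, Theorem~\ref{contract} supplies the link condition, the two induction hypotheses supply strong edge-decomposability of $\mathcal{C}(e,\Sigma)$ and $\operatorname{lk}(e,\Sigma)$, and the definition of strongly edge-decomposable is met, completing the proof; the final sentence of the theorem is then immediate since strongly vertex-decomposable spheres are strongly shrinkable.
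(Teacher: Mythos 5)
Your proposal follows essentially the same route as the paper: contract an edge at $v_1$, use Theorem~\ref{contract}$(iv)$ to get the link condition, and show that both $\mathcal{C}(e,\Sigma)$ and $\mathrm{lk}(e,\Sigma)$ inherit strong shrinkability, then invoke the double induction. The one choice you leave open is settled by taking $w=v_2$: the connectivity condition in the shrinking order makes $v_1v_2$ an edge, the fact that $\mathcal{N}_2(\Sigma)=\mathrm{st}(v_1,\Sigma)\cup\mathrm{st}(v_2,\Sigma)$ is a ball gives the link condition, $\mathrm{lk}(v_1v_2,\Sigma)=\mathrm{lk}(v_2,\partial\mathcal{N}_1(\Sigma))$ is strongly shrinkable directly by the $j=1$ clause of the definition, and the balls $\bigcup_{2\le i\le j}\mathrm{st}(v_i,\mathcal{C}(e,\Sigma))$ for the contraction come from Newman's theorem applied to the deletions, exactly as you indicate.
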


\begin{proof}
We use induction both on the dimension and the length of the shrinking order. So, let $\Sigma$ be a $(d-1)$-dimensional shrinkable sphere with a shrinking order $v_1,v_2,\ldots,v_k$. If $d\leq 3$ or $k=1$, then $\Sigma$ is strongly edge-decomposable. Hence, we may assume that $k>1$ and every strongly shrinkable sphere of dimension less than $d-1$ or with a shrinking order of length less than $k$ is strongly edge-decomposable. It follows from the definition that $e=v_1v_2$ is an edge of $\Sigma$ and $\mathrm{st}(v_1,\Sigma)\cup\mathrm{st}(v_2,\Sigma)$ is a $(d-1)$-dimensional ball. Hence, by Theorem \ref{contract}, one sees that $\Sigma$ satisfies the link condition with respect to $e$. In particular, $\mathcal{C}(e,\Sigma)$ is a PL-sphere. Our desired conclusion follows from the claims below: 
\begin{claim}\label{c1}
$\mathcal{C}(e,\Sigma)$ is strongly shrinkable.
\end{claim}
\begin{claim}\label{c2}
$\mathrm{lk}(e,\Sigma)$ is strongly shrinkable.
\end{claim}

\begin{proof}[Proof of Claim \ref{c1}]
Since for any $2\leq j\leq k$ 
\( (\mathcal{C}(e,\Sigma)\setminus\{v_2\})\setminus\ldots\setminus\{v_j\}=(\Sigma\setminus\{v_1\})\setminus\ldots\setminus\{v_j\}\)
is a PL-ball, it follows from the Newman's Theorem that the complex
\[ \bigcup_{2\leq i\leq j}\mathrm{st}(v_i,\mathcal{C}(e,\Sigma))
\]
is also a PL-ball. 
\end{proof}

\begin{proof}[Proof of Claim \ref{c2}]
It follows easily from the definition of strongly shrinkable spheres, the general equation \(\mathrm{lk}(e,\Sigma)=\mathrm{lk}(v_2,\mathrm{lk}(v_1,\Sigma))\) and the fact that \(\mathrm{lk}(v_1,\Sigma)=\partial \mathcal{N}_1(\Sigma)\).
\end{proof}
\end{proof}

\section{Remarks}\label{Sec:Ex}

\begin{rem}
In dimensions less than or equal to $4$ every shedding order is also a strong shedding order and, consequently, all vertex-decomposable spheres of dimension less than or equal to $4$ are strongly vertex-decomposable. In all dimensions greater than $4$, using a result of Pachner \cite[Theorem 5.8]{Pachner}, one can construct examples of shedding orders that are not strong. However, this does not eliminate the possibility of existence of other shedding orders that are strong. 

\end{rem}

It is interesting to construct a concrete example, if such an example exists at all, of a vertex-decomposable sphere which is not strongly vertex-decomposable. Since, every strongly vertex-decomposable sphere is strongly edge-decomposable, it would be enough to find a vertex-decomposable sphere which is not strongly edge-decomposable. 
\begin{prob}
Find a triangulated sphere which is vertex-decomposable but not strongly edge-decomposable. 
\end{prob}

\begin{rem}
For the inductive argument given by Adiprasito in the proof of \cite[Theorem 6.3]{Karim} to work for a PL-sphere $\Sigma$, it is necessary and sufficient that 
\begin{itemize}
\item[(i)] $\Sigma$ is shrinkable with a shrinking order $v_1,v_2,\ldots,v_k$, and 
\item[(ii)] the complexes $\mathrm{lk}(v_{j+1},\partial \mathcal{N}_j(\Sigma))$ are Lefschetz.
\end{itemize}
In particular, the argument given by Adiprasito shows that:
\begin{itemize}
\item[(1)] Every strongly shrinkable sphere is a Lefschetz complex.
\item[(2)] If all PL-spheres of dimension less than or equal to $d$ are Lefschetz complexes, then all shrinkable spheres of dimension less than or equal to $d+2$ are Lefschetz complexes. 
\end{itemize}
See also \cite[Section 6]{Adi19b} where shrinkable and strongly shrinkable spheres are called $B$-decomposable and  $A$-decomposable respectively. 
\end{rem}

\begin{rem}

While we were at the final stage of preparing this note, two new papers by Karu \cite{Kar} and independently by Adiprasito and Steinmeyer \cite{AS} appeared on arXiv announcing simpler proofs of the $g$-conjecture for PL-spheres.
\end{rem}

%---------------------------------------------------------------
%\section*{Acknowledgements}

%The author is supported by the Knut and Alice Wallenberg Foundation.

%%%%%%%%%%%%%%%%%%%%%%%%%%%%%%%%%%%%%%%%%%%%%%%%%
\def\cprime{$'$}
\providecommand{\bysame}{\leavevmode\hbox to3em{\hrulefill}\thinspace}
\providecommand{\MR}{\relax\ifhmode\unskip\space\fi MR }
% \MRhref is called by the amsart/book/proc definition of \MR.
\providecommand{\MRhref}[2]{%
  \href{http://www.ams.org/mathscinet-getitem?mr=#1}{#2}
}
\providecommand{\href}[2]{#2}


\begin{thebibliography}{Mur10}
{\small{

  

\bibitem[Adi18]{Karim}
Karim Adiprasito, \emph{Combinatorial Lefschetz theorems beyond positivity}, (2018), preprint, arXiv:1812.10454v2.

%\bibitem[Adi19a]{Adi19}
%Karim Adiprasito: \emph{The g-conjecture for vertex decomposible spheres}, at \url{https://gilkalai.wordpress.com} (February 23, 2019).

\bibitem[Adi19]{Adi19b}
Karim Adiprasito, \emph{FAQ on the g-theorem and the hard Lefschetz theorem for face rings}, (2019), preprint, arXiv:1906.05859.



\bibitem[AS19]{AS}
Karim Adiprasito and Johanna Steinmeyer, \emph{The hard Lefschetz theorem for PL spheres}, (2019), preprint, arXiv:1906.00737v2.

\bibitem[BN10]{BabNevo}
Eric Babson and Eran Nevo, \emph{Lefschetz properties and basic constructions
  on simplicial spheres}, J. Algebraic Combin. \textbf{31} (2010), no.~1,
  111--129.
  
    \bibitem[BL80]{BL80} Louis Billera and Carl Lee
\emph{A proof of the sufficiency of McMullen's conditions for $f$-vectors of simplicial convex polytopes}, J. Combin. Theory Ser. A, \textbf{31} (1981), 237--255. 


\bibitem[Bj\"o95]{Top}
Anders Bj\"orner, \emph{Topological methods}, Handbook of Combinatorics. vol. 1–2, MIT Press, Elsevier (North-Holland), Amsterdam, Cambridge, MA (1995), 1819--1872.


%\bibitem[Coh68]{Newman}
%M.M.~Cohen, \emph{A proof of {N}ewman's theorem}, Proc. Cambridge
 % Philos. Soc. \textbf{64} (1968), 961--963.
 
 
\bibitem[Kar19]{Kar}
Kalle Karu, \emph{Weak Lefschetz theorem for simplicial PL-spheres}, (2019), preprint, arXiv:1905.12039.

\bibitem[KK87]{KK}
Victor Klee and Peter Kleinschmidt, \emph{The {$d$}-step conjecture and its
  relatives}, Math. Oper. Res. \textbf{12} (1987), 718--755.

%\bibitem[Lut08]{Lutz10v}
%F.H.~Lutz, \emph{Combinatorial 3-manifolds with 10 vertices}, Beitr\"{a}ge
%  Algebra Geom. \textbf{49} (2008), no.~1, 97--106.

\bibitem[Mur10]{Murai}
Satoshi Murai, \emph{Algebraic shifting of strongly edge decomposable spheres},
  J. Combin. Theory Ser. A \textbf{117} (2010), no.~1, 1--16.

\bibitem[Nev07]{NevoMinors}
Eran Nevo, \emph{Higher minors and {V}an {K}ampen's obstruction}, Math. Scand.
  \textbf{101} (2007), no.~2, 161--176.

\bibitem[Pac91]{Pachner}
Udo Pachner, \emph{P.{L}. homeomorphic manifolds are equivalent by elementary
  shellings}, European J. Combin. \textbf{12} (1991), no.~2, 129--145.

\bibitem[PB80]{PB80}
Scott Provan and Louis Billera, \emph{Decompositions of simplicial
  complexes related to diameters of convex polyhedra}, Math. Oper. Res.
  \textbf{5} (1980), no.~4, 576--594.
  \bibitem[Sta80]{Sta80} Richard Stanley, \emph{The number of faces of a simplicial convex polytope}, Adv. in Math. 35
(1980), 236–238.
\bibitem[Sta96]{green} 
Richard Stanley, \emph{Combinatorics and commutative algebra}. 
Second edition. Progress in Mathematics, 41. Birkh\"auser Boston, Inc., Boston, MA, 1996. 

\bibitem[Swa14]{Swa} Ed Swartz, \emph{Thirty-five years and counting}, (2014), preprint, arXiv:1411.0987. 
  
  }}

\end{thebibliography}
\end{document}